\newtheorem{theorem}{Theorem}
\newtheorem{definition}[theorem]{Definition}
\numberwithin{equation}{section}
\newtheorem{proposition}[theorem]{Proposition}
\newtheorem*{acknowledgements}{Acknowledgements}
\begin{document}

\title{Two short proofs of the bounded case of S.\ B.\ Rao's degree sequence conjecture}
\author{Vaidy Sivaraman}

\address{231 WEST 18th AVE, DEPARTMENT OF MATHEMATICS, THE OHIO STATE UNIVERSITY, COLUMBUS, OH 43210}
\email{vaidy@math.ohio-state.edu}

\begin{abstract}
 S.\  B.\  Rao conjectured that graphic sequences are well-quasi-ordered under an inclusion based on induced subgraphs (Rao, 1981 \cite{SBR}). This conjecture has now been settled completely by M.\ Chudnovsky and P.\ Seymour (Chudnovsky and Seymour, submitted \cite{MCPS}). One part of the proof proves the result for the bounded case, a result proved independently by C.\  J.\  Altomare (Altomare, 2011 \cite{CA}). We give two short proofs of the bounded case of S.\ B.\ Rao's conjecture. Both the proofs use the fact that if the number of entries in an integer sequence (with even sum) is much larger than its highest term, then it is necessarily graphic.

\end{abstract}

\keywords{Degree Sequence; S.\ B.\ Rao's conjecture; Well-quasi-ordering}

\date{December 16, 2011 \\  \text{      }  \text{  } \small {2010 Mathematics Subject Classification: 05C07}}

\maketitle

All our graphs are simple, i.e., we allow neither loops nor multiple edges. An  integer sequence $D = (d_1,d_2, \dots ,d_n)$ with  $d_1 \geq d_2 \geq  \dots  \geq d_n \geq 1$ is called graphic if there exists a graph $G$ whose degree sequence is $D$. A graph $G$ is said to be a realization of an integer sequence $D$ if the degree sequence of $G$ is $D$.

\begin{definition}
Let $D_1$ and $D_2$ be graphic sequences. Then we write $D_1 \leq D_2$ to mean there exist graphs $G_1$ and $G_2$ such that $G_i$ is a realization of $D_i$ for $i=1,2,$ and $G_1$ is an induced subgraph of $G_2$.  
\end{definition}

The above order is obviously reflexive, and easily seen to be antisymmetric and transitive. We will prove Theorem \ref{BoundedSBRAO}, a restricted version of S.\ B.\ Rao's conjecture, in two different ways. The first proof is based on an idea of C.\  J.\  Altomare that, it is sometimes advantageous to use the regularity sequence (defined later)  instead of the degree sequence of a graph. The second proof is based on an observation of N.\  Robertson that, by virtue of Higman's finite sequences theorem, it suffices to prove that bounded graphic sequences can be realized by graphs with bounded component size. Both the proofs use the fact that if the number of entries in an integer sequence (with even sum) is much larger than its highest term, then it is necessarily graphic \cite{ZVZV}. To be self-contained, we will prove this using the Erd\H{o}s-Gallai condition for an integer sequence to be graphic. 

\begin{theorem}[Erd\H{o}s-Gallai \cite{PETG}] \label{ErdosGallai}
Let $D = (d_1,d_2, \dots ,d_n)$ be an integer sequence with $d_1 \ge d_2 \geq \dots \geq d_n \geq 1$ and $\displaystyle \sum_{i=1}^{n} d_i$ even. Then $D$ is graphic if and only if for every $k \in \{1,2, \dots ,n\}$, the following holds:

\begin{equation*}
\displaystyle \sum_{i=1}^{k} d_i \leq k(k-1) + \displaystyle \sum_{i=k+1}^{n} \text{min}(d_i,k).
\end{equation*}

\end{theorem}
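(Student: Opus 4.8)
The statement is an equivalence, so my plan splits into the (easy) necessity and the (harder) sufficiency.

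For necessity I would start from a realization $G$ with vertices $v_1,\dots,v_n$ where $\deg(v_i)=d_i$, fix $k$, and count the edge-endpoints incident to the set $S=\{v_1,\dots,v_k\}$. The quantity $\sum_{i=1}^{k} d_i$ equals twice the number of edges inside $S$ plus the number of edges joining $S$ to its complement. The first contribution is at most $2\binom{k}{2}=k(k-1)$. For the second, I would count the crossing edges from the far side: each vertex $v_i$ with $i>k$ is joined to at most $\min(d_i,k)$ vertices of $S$, since it has only $d_i$ neighbours in total and $S$ has only $k$ vertices. Summing over $i>k$ bounds the crossing edges by $\sum_{i=k+1}^{n}\min(d_i,k)$, and adding the two contributions yields the asserted inequality. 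This direction is routine.

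For sufficiency I would argue by induction on $\tfrac12\sum_i d_i$, the prospective number of edges. The base case, where every $d_i=1$, is immediate: the even-sum hypothesis forces $n$ to be even and a perfect matching realizes $D$. For the inductive step, given $D$ satisfying every Erd\H{o}s--Gallai inequality with $d_1\ge 2$, I would form a new sequence $D'$ of strictly smaller sum by decreasing $d_1$ together with $d_t$ by $1$ each, where $t$ is the largest index with $d_t=d_2$. With this choice the reduced multiset is automatically non-increasing (up to a trivial swap when $d_1=d_2$), so no genuine re-sorting is needed, and decrementing two entries keeps the sum even. The plan is then to verify that $D'$ still satisfies every Erd\H{o}s--Gallai inequality, invoke the induction hypothesis to get a realization $G'$, and recover a realization of $D$ by adding the edge between the two vertices corresponding to the decremented coordinates (performing a standard $2$-switch first should they already be adjacent).

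The main obstacle is precisely the verification that the reduced sequence $D'$ continues to satisfy all $n$ inequalities. Since only the coordinates at positions $1$ and $t$ drop, the inequality for a given $k$ changes in a way that depends on whether $1\le k<t$ or $k\ge t$, and on whether the $\min(d_i,k)$ terms on the right-hand side are touched by the decrements; these right-hand terms interact subtly with the reduction. I expect the heart of the proof to be a short case analysis showing that in every case the inequality for $D'$ follows from the corresponding (or a neighbouring) inequality assumed for $D$. The choice of $t$ as the last index attaining the value $d_2$ is exactly what makes this bookkeeping manageable, since it localizes the change on the right-hand side, and getting that choice right is the delicate point of the whole argument. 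Once preservation is established the induction closes at once.
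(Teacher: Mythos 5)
You should first note that the paper does not prove this theorem at all: it is quoted as a classical result with a citation to Erd\H{o}s and Gallai, and only its statement is invoked (in the proof of Proposition \ref{SufficientGraphicSequence}). So there is no proof in the paper to compare against, and your sketch has to stand on its own. Its necessity half does: counting edge-endpoints of $S=\{v_1,\dots,v_k\}$ as twice the internal edges plus the crossing edges, and bounding the latter by $\sum_{i>k}\min(d_i,k)$, is complete and correct.

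The sufficiency half, however, has a genuine gap. The entire difficulty of the Erd\H{o}s--Gallai theorem is the verification that the reduced sequence $D'$ still satisfies all $n$ inequalities, and you explicitly defer it (``I expect the heart of the proof to be a short case analysis''). That verification is not routine bookkeeping; it is the theorem. Moreover, two specific points in your outline need real arguments rather than assertions. First, your reduction (decrement $d_1$ and the \emph{last} entry equal to $d_2$) is not the standard Choudum reduction (which decrements the last entry equal to $d_1$ and the smallest entry $d_n$), so you cannot borrow the known case analysis; you would have to prove EG-preservation for your own choice, and it is not obvious that it holds for every $k$. Second, the closing step ``add the edge, performing a $2$-switch first should they already be adjacent'' silently assumes such a switch exists. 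Consider $D=(2,2)$ and your reduction $D'=(1,1)$: $D'$ satisfies EG and its unique realization is a single edge on the two decremented vertices, so no $2$-switch is available and the edge cannot be added. Of course $D=(2,2)$ fails EG at $k=2$, so the induction should never reach this configuration --- but that shows the $2$-switch step must itself appeal to the EG hypothesis on $D$, not merely to the existence of a realization of $D'$. Until the preservation lemma and the adjacency-repair step are actually proved, the sufficiency direction is an outline of a known proof strategy rather than a proof.
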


S.\ B.\ Rao conjectured that if $D_1,D_2, \dots $ is an infinite sequence of graphic sequences, then there exist indices $i <  j$ such that $D_i \leq D_j$. This has now been proved by M.\ Chudnovsky and P.\  Seymour. We will be interested in the bounded version of the problem.

\begin{theorem}\label{BoundedSBRAO}
Let $N$ be a fixed positive integer. Let $D_1,D_2, \dots $ be an infinite sequence of graphic sequences with no entry in any sequence exceeding $N$. Then there exist indices $i <  j$ such that $D_i \leq D_j$. 
\end{theorem}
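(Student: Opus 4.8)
The plan is to follow the second (Robertson) strategy: reduce the theorem to a statement about realizations with small components, and then finish by a soft well-quasi-ordering argument. Concretely, I would first establish the following key lemma: there is a constant $M = M(N)$ such that every graphic sequence with all entries in $\{1,\dots,N\}$ admits a realization in which every connected component has at most $M$ vertices. Granting this, the theorem follows quickly. For each $i$ choose a realization $G_i$ of $D_i$ with all components of size at most $M$. Since there are only finitely many graphs (up to isomorphism) on at most $M$ vertices, the connected components of all the $G_i$ lie in a single finite set $\mathcal{F}$ of connected graphs. Recording each $G_i$ by the vector of multiplicities of its component types gives a point of $\mathbb{N}^{|\mathcal{F}|}$, and by Dickson's lemma (equivalently, Higman's theorem applied to the finite, hence well-quasi-ordered, set $\mathcal{F}$) there are indices $i < j$ with the multiplicity vector of $G_i$ dominated coordinatewise by that of $G_j$. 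Because distinct components span no edges between them, the subgraph of $G_j$ induced on a matching family of whole components is exactly the disjoint union of those components; hence $G_i$ is an induced subgraph of $G_j$, which is precisely $D_i \leq D_j$.

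The heart of the matter is thus the key lemma, and here I would use the ``many entries'' fact: any integer sequence with even sum, all entries in $\{1,\dots,N\}$, and length exceeding a threshold $n_0 = n_0(N)$ is graphic. Given such a threshold, set $M := 2n_0$. Let $D$ be graphic with entries in $\{1,\dots,N\}$. If $D$ has length less than $n_0$ it is already realized on fewer than $M$ vertices, so assume its length is at least $n_0$. Since $D$ is graphic its sum is even, so the number of odd entries of $D$ is even. I would then partition the multiset of entries of $D$ into blocks, each of length in $[n_0, 2n_0]$ and each containing an even number of odd entries (hence of even sum); such a partition exists because every integer at least $n_0$ is a sum of integers in $[n_0, 2n_0]$, and the even parity of the total count of odd entries lets us hand an even number of them to each block. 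By the ``many entries'' fact each block is graphic; realizing each block separately and taking the disjoint union yields a realization of $D$ in which every component has at most $2n_0 = M$ vertices.

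It remains to prove the ``many entries'' fact, for which I would verify the Erd\H{o}s--Gallai inequalities of Theorem \ref{ErdosGallai} directly. Write $D = (d_1, \dots, d_n)$ with $d_1 \leq N$ and even sum. For $k \geq N+1$ one has $k(k-1) \geq kN \geq \sum_{i=1}^{k} d_i$, so the inequality is immediate. For $k \leq N$, since each $d_i \geq 1$ the term $\sum_{i=k+1}^{n} \min(d_i,k) \geq n-k$, so the right-hand side is at least $k(k-1) + (n-k)$ while the left-hand side is at most $kN$; thus it suffices that $n \geq k(N-k+2)$ for all $k \leq N$, which holds once $n \geq n_0(N)$ for a suitable quadratic threshold $n_0(N)$ (of order $N^2$).

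I expect the main obstacle to be the key lemma rather than the wqo packaging: the delicate point is organizing the partition into blocks so that each block simultaneously has bounded length and even sum (equivalently, an even number of odd entries), since a careless greedy cut can strand an odd running sum among a long tail of even entries. Once this parity bookkeeping is arranged, the ``many entries'' fact does all the work, and the remainder of the argument is the standard Dickson/Higman treatment of finitely many component types.
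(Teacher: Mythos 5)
Your proposal is correct and is essentially the paper's second proof: a bounded-component-size lemma (Proposition \ref{BoundedComponentSize}) proved via the Erd\H{o}s--Gallai-based ``many entries'' fact (Proposition \ref{SufficientGraphicSequence}), followed by the Dickson/Higman argument on component types. The only real difference is that the paper sidesteps your delicate parity bookkeeping: it cuts $D$ into consecutive chunks of length exactly $L=(d_1)^2$ (last chunk up to $2L$) and then simply pairs up the chunks with odd sum --- their number is even because the total degree sum is even --- accepting components of size up to $3L$ instead of your $2n_0$.
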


 We will need the following easy result, a special case of a theorem of I.\ E.\  Zverovich and V.\ E.\  Zverovich \cite{ZVZV}. 

\begin{proposition}\label{SufficientGraphicSequence}
Let $D = (d_1, d_2,  \dots ,d_n)$ be an integer sequence with $d_1\geq d_2 \geq \dots \geq d_n \geq 1$ and $\displaystyle \sum_{i=1}^{n} d_i$ even. If $n \geq (d_1)^2$, then $D$ is graphic. 
\end{proposition}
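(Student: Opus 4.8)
The plan is to prove Proposition~\ref{SufficientGraphicSequence} by verifying the Erd\H{o}s--Gallai inequality from Theorem~\ref{ErdosGallai} for every $k \in \{1, 2, \dots, n\}$. Write the inequality we must establish as
\begin{equation*}
\sum_{i=1}^{k} d_i \leq k(k-1) + \sum_{i=k+1}^{n} \min(d_i, k).
\end{equation*}
The left-hand side is easy to bound from above using the hypothesis that $d_1 \leq \sqrt{n}$ (since $n \geq (d_1)^2$): each of the $k$ terms is at most $d_1$, so $\sum_{i=1}^{k} d_i \leq k d_1$. The strategy is to show that the right-hand side already absorbs this much, so I would look for a clean lower bound on the right-hand side that dominates $k d_1$.

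First I would split into two regimes according to the size of $k$ relative to $d_1$. When $k \geq d_1$, the term $k(k-1)$ alone is large: we have $\sum_{i=1}^{k} d_i \leq k d_1 \leq k \cdot k = k^2$, and comparing with $k(k-1) = k^2 - k$ suggests I need only a little help from the min-sum, which is harmless since it is nonnegative and each $d_i \geq 1$. More carefully, I expect $k d_1 \leq k(k-1) + \sum_{i=k+1}^n \min(d_i,k)$ to follow once $k$ is comparable to $d_1$, using that there are $n-k$ surviving terms each contributing at least $\min(1, k) = 1$. Second, when $k < d_1$, the quadratic term $k(k-1)$ is too small to do the work alone, so the burden shifts to the sum $\sum_{i=k+1}^{n} \min(d_i, k)$. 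Here the key observation is that $n$ is huge compared to $d_1$: there are at least $n - k \geq n - d_1 \geq (d_1)^2 - d_1$ remaining entries, and since each $d_i \geq 1$ but more usefully I can bound $\min(d_i, k)$ from below by using that many entries are at least $k$, or at worst contribute $1$ each. The abundance of terms, guaranteed by $n \geq (d_1)^2$, is what makes the right-hand side overwhelm the left.

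The main obstacle I anticipate is the small-$k$ regime, specifically giving a lower bound on $\sum_{i=k+1}^{n} \min(d_i, k)$ that is genuinely large. The naive bound $\min(d_i,k) \geq 1$ yields $\sum_{i=k+1}^n \min(d_i,k) \geq n-k$, and combined with $k(k-1) \geq 0$ this gives a right-hand side of at least $n - k \geq (d_1)^2 - k$; meanwhile the left-hand side is at most $k d_1$. So it suffices to check $k d_1 \leq k(k-1) + (n-k)$, i.e. $k d_1 + k \leq k(k-1) + n$. Since $k \leq d_1 \leq \sqrt{n}$, we have $k d_1 \leq (d_1)^2 \leq n$, and the extra $k$ on the left is comfortably absorbed by $k(k-1)$ once $k \geq 2$ (the cases $k=1$ being immediate since $d_1 \leq n$). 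I therefore expect the whole argument to reduce to this single inequality $k d_1 + k \leq k(k-1) + n$, handled by the two cases above, and I would present it by bounding the left side by $kd_1$, the right side below by $\max(k(k-1),\, n-k)$, and checking the crossover is covered. The only care needed is the boundary values $k=1$ and $k=n$, which I would verify directly.
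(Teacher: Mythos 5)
Your proposal is correct and follows essentially the same route as the paper: verify the Erd\H{o}s--Gallai condition, bound the left side by $kd_1$, split into the regimes $k$ large versus $k$ small relative to $d_1$ (using $k(k-1)$ in the first and $\sum_{i>k}\min(d_i,k)\ge n-k$ together with $n\ge (d_1)^2$ in the second), and treat $k=1$ separately. The only point to tighten is that $k=1$ requires $d_1\le n-1$ rather than $d_1\le n$, which still holds since $n\ge (d_1)^2$ and (when $d_1=1$) the even-sum hypothesis forces $n\ge 2$.
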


\begin{proof}
If $d_1 = 1$, then clearly $D$ is graphic. Hence we will assume that $d_1 > 1$. Suppose $k > d_1$.  Then 
\begin{align*}
\displaystyle \sum_{i=1}^{k} d_i  &\leq \displaystyle \sum_{i=1}^{k} d_1 \\ &\leq k(k-1).
\end{align*}
Suppose $1 < k \leq d_1$. Then 
\begin{align*}
\displaystyle \sum_{i=1}^{k} d_i    &\leq kd_1 \\ &\leq (d_1)^2 \\ &\leq n,
\end{align*} and 
\begin{align*}
 k(k-1) + \displaystyle \sum_{i=k+1}^{n} \text{min}(d_i,k) &\geq  k(k-1) + n- k \\ &= n + k(k-2) \\ &\geq n.
 \end{align*} Suppose $k=1$. Then 
$$ \displaystyle \sum_{i=1}^{k} d_i = d_1,$$ and 
\begin{align*}
 k(k-1) + \displaystyle \sum_{i=k+1}^{n} \text{min}(d_i,k) &\geq n-1 \\ &\geq d_1^2 - 1 \\ &\geq d_1.
 \end{align*}
 Hence the Erd\H{o}s-Gallai condition is satisfied. By  Theorem \ref{ErdosGallai}, $D$ is graphic. 
\end{proof}

For the first proof, we will need the notion of regularity sequence, first used by C.\  J.\  Altomare \cite{CA}.   We associate a vector to every graphic sequence as follows: If $D$ is the graphic sequence in which $i$ occurs $a_i$ times, i.e., $D = (N^{a_N}, \dots , 2^{a_2}, 1^{a_1})$, then the regularity sequence of $D$  is $(a_N, \dots ,a_2, a_1)$. \\

We will also need the following easy result. Let $k$ be a positive integer. Let $\mathbb{N}$ denote the set of non-negative integers. Consider the quasi-order $(\mathbb{N}^k, \leq_H)$, where $(a_1, \dots ,a_k) \leq_H (b_1, \dots ,b_k)$ if $a_i \leq b_i$ for all $1 \leq i \leq k$. We will use the  definition and equivalent characterizations of a well-quasi-order (WQO), as given in  \cite{DI}. Since a Cartesian product of a WQO is a WQO, we have the following. 

\begin{proposition}\label{CartesianProduct}
$(\mathbb{N}^k, \leq_H)$ is a WQO. 
\end{proposition}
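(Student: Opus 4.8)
The plan is to prove the statement by induction on $k$, using the equivalent characterization from \cite{DI} that a quasi-order $(Q, \leq)$ is a WQO if and only if every infinite sequence $q_1, q_2, \dots$ of elements of $Q$ contains an infinite non-decreasing subsequence, i.e.\ indices $i_1 < i_2 < \cdots$ with $q_{i_1} \leq q_{i_2} \leq \cdots$. This reformulation is the key, because it converts the product statement into a matter of repeated subsequence extraction.

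For the base case $k=1$, I would observe that $(\mathbb{N}, \leq)$ is a WQO: since the non-negative integers are well-ordered, any infinite sequence has a least element, and iterating this observation (or appealing directly to the fact that every infinite sequence of naturals has an infinite non-decreasing subsequence) yields the desired non-decreasing subsequence.

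For the inductive step, I would assume that $(\mathbb{N}^{k-1}, \leq_H)$ is a WQO and take an arbitrary infinite sequence $v_1, v_2, \dots$ in $\mathbb{N}^k$. First I would project each $v_m$ onto its first $k-1$ coordinates; by the inductive hypothesis this projected sequence has an infinite non-decreasing subsequence, say indexed by $m_1 < m_2 < \cdots$. Restricting attention to this subsequence, the first $k-1$ coordinates are already non-decreasing, and I would then apply the base case to the last coordinates of $v_{m_1}, v_{m_2}, \dots$ to extract a further infinite subsequence along which the final coordinate is also non-decreasing. Along this last subsequence all $k$ coordinates are simultaneously non-decreasing, so the subsequence is non-decreasing in $\leq_H$, which completes the induction.

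I do not expect a genuine obstacle here; the statement is essentially Dickson's Lemma and reduces to coordinatewise extraction. The only point requiring mild care is checking that extracting a subsequence to tame one coordinate does not disturb the coordinates already made non-decreasing. This is automatic, since any subsequence of a non-decreasing sequence is again non-decreasing, so the two extractions can be performed in succession without conflict.
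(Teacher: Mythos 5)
Your proof is correct. The paper offers no argument of its own here---it simply invokes the fact that a Cartesian product of WQOs is a WQO (citing Diestel for the definitions and equivalent characterizations)---and your inductive, coordinatewise subsequence-extraction argument is precisely the standard proof of that fact, so there is no substantive difference in approach; you have merely made explicit what the paper delegates to a reference.
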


\begin{proof}[\bf{First proof of Theorem \ref{BoundedSBRAO}}]
We look at the corresponding sequence of regularity sequences $V_1, V_2, \dots $. Note that $V_i \in \mathbb{N}^N$ for all $i$. By Proposition  \ref{CartesianProduct}, we have an infinite ascending subsequence with respect to $\leq_H$. Restrict to that subsequence, whose elements we now denote $V_1, V_2, \dots $. If the sum of the entries in the vectors $\{V_k\}$ is bounded, then there exist indices $i < j$ such that $V_i = V_j$, and hence $D_i \leq D_j$. If not, let $j$ be such that the sum of entries in $V_j$ is at least $N^2$ plus the sum of entries in $V_1$. Let $H$ be a graph realizing $D_1$ and let $K$ be a graph whose regularity sequence is $V_j - V_1$ (such a graph exists by Proposition \ref{SufficientGraphicSequence}). The disjoint union of $H$ and $K$ realizes $D_j$, and hence $D_1 \leq D_j$. 
\end{proof}

N.\ Robertson asked whether  bounded graphic sequences can be realized by graphs with bounded component size? The following proposition answers this question.

\begin{proposition}\label{BoundedComponentSize}
If $D = (d_1, d_2, \dots ,d_n)$ is a graphic sequence, then there exists a graph $G$ with degree sequence $D$ none of whose components have more than $3(d_1)^2$ vertices. 
\end{proposition}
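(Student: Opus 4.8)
The plan is to avoid committing to a single realization of $D$ and instead to partition the multiset of degrees into graphic sub-multisets, each having at most $3(d_1)^2$ terms; realizing each piece separately and taking the disjoint union then produces a graph with degree sequence $D$ in which every component lives inside one piece and so has at most $3(d_1)^2$ vertices. The engine of the partition is Proposition \ref{SufficientGraphicSequence}: any sequence with even sum, all terms between $1$ and $d_1$, and at least $(d_1)^2$ terms is automatically graphic. Hence a sub-multiset $A \subseteq D$ whose size lies in the interval $[(d_1)^2, 2(d_1)^2]$ and whose sum is even is graphic, and it has at most $3(d_1)^2$ terms, as required.

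First I would dispose of the base case $n \le 3(d_1)^2$: here $D$ is graphic by hypothesis and any realization already has at most $3(d_1)^2$ vertices in total, so the bound holds vacuously. When $n > 3(d_1)^2$ I would peel off one good piece $A$ and induct on $D \setminus A$. Choosing $A$ with $|A| \in [(d_1)^2, 2(d_1)^2]$ guarantees that the leftover $D \setminus A$ still has more than $(d_1)^2$ terms and even sum, so it too is graphic (again by Proposition \ref{SufficientGraphicSequence}), has maximum term at most $d_1$, and has strictly fewer terms than $D$; the induction therefore terminates and assembles $D$ from pieces each of size at most $3(d_1)^2$.

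The one point that needs care, and the step I expect to be the real obstacle, is forcing each piece to have even sum, since Proposition \ref{SufficientGraphicSequence} is useless without this. The key observation is that I am free to choose $A$ to be an \emph{arbitrary} sub-multiset rather than an initial block of the sorted sequence, and that the number of odd terms in $D$ is even because $\sum_{i=1}^{n} d_i$ is even. Concretely, I would start with any $(d_1)^2$ of the terms; if their sum is even I stop, and if it is odd then the chosen set contains an odd number of odd terms, so (the total count of odd terms being even) at least one odd term lies outside the chosen set, and adjoining it repairs the parity while keeping the size within $[(d_1)^2, 2(d_1)^2]$. This freedom of choice is exactly what a naive ``cut the sorted list into contiguous blocks'' strategy lacks: a long run of equal even degrees can otherwise trap the running sum at an odd value, and it is precisely to sidestep this, together with the slack that produces the bound $3(d_1)^2$, that arbitrary subsets are used.
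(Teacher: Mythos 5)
Your argument is correct, and it runs on the same engine as the paper's proof, namely Proposition \ref{SufficientGraphicSequence}: both proofs split the multiset of degrees into even-sum pieces of size between $(d_1)^2$ and $3(d_1)^2$, realize each piece separately, and take the disjoint union. The only real difference is how the pieces are manufactured. The paper does it in one shot: cut the sorted sequence into contiguous blocks of length $L=(d_1)^2$ (the last of length $L+r<2L$), note that the number of odd-sum blocks is even because the total sum is even, and merge the odd-sum blocks in pairs, giving even-sum pieces of length at most $3L$. You instead peel off one piece at a time, taking $(d_1)^2$ arbitrary terms and adjoining a single odd entry when the parity is wrong, then induct on the remainder, which is itself graphic by Proposition \ref{SufficientGraphicSequence} since it retains more than $(d_1)^2$ terms and even sum. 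Both parity repairs are sound, but your closing claim that a contiguous-block strategy is doomed is not quite right: the paper's pairwise-merging trick fixes parity without ever selecting non-contiguous subsets, so the freedom to choose arbitrary sub-multisets is a convenience of your route rather than a necessity. On the other hand, your pieces have size at most $2(d_1)^2$, so your argument actually proves a slightly stronger bound than stated. One detail worth making explicit if you write this up: the induction hypothesis applied to $D\setminus A$ yields components of size at most $3(d_1')^2$ where $d_1'$ is the largest entry of the remainder, and this is at most $3(d_1)^2$ because $d_1'\leq d_1$.
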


\begin{proof}
Let $L=(d_1)^2$. Let $q$ and $r$ be integers such that $n = qL + r$ such that $0 \leq r < L$. If $q=0$, the result is obvious. If not, divide $D$ into $q$ integer sequences $D_1,D_2, \dots ,D_q$ as follows: For $i = 1, \dots ,q-1$, the $i$th integer sequence is $(d_{(i-1)L+1}, \dots , d_{iL})$, and the $q$th integer sequence is $(d_{(q-1)L+1}, \dots ,d_n)$. Arbitrarily pair and combine integer sequences in the collection $\{D_1,D_2, \dots ,D_q\}$ that have odd sum to get integer sequences $P_1,P_2, \dots ,P_k$, each of which has even sum and length between $L$ and $3L$. By Proposition \ref{SufficientGraphicSequence}, the $P_i$'s are graphic; let $G_i$ be a realization of  $P_i$. Then the disjoint union of the $G_i$'s is a realization of $D$ with each component having at most $3L$ vertices. 
\end{proof}

Higman's finite sequences theorem (cf. Section 12.1 in \cite{DI}) says that if $(Q, \leq)$ is a WQO, then so is the set of finite sequences of $Q$ under the Higman embedding. Proposition \ref{BoundedComponentSize}, together with Higman's finite sequences theorem, proves Theorem \ref{BoundedSBRAO}. \\

\begin{acknowledgements}
I am indebted to Neil Robertson and Christian Joseph Altomare for introducing me to degree sequences and S.\ B.\ Rao's conjecture, and for carefully proofreading this article. I would like to thank Matthew Kahle for suggesting several improvements in the presentation. 
\end{acknowledgements}

\end{document}